\theoremstyle{plain}
\newtheorem{thm}{\protect\theoremname}[section]
  \theoremstyle{plain}
  \newtheorem{lem}[thm]{\protect\lemmaname}
  \theoremstyle{remark}
  \newtheorem{rem}[thm]{\protect\remarkname}
  \theoremstyle{plain}
  \newtheorem{cor}[thm]{\protect\corollaryname}
  \theoremstyle{plain}
  \newtheorem{conjecture}[thm]{\protect\conjecturename}
  \providecommand{\conjecturename}{Conjecture}
  \providecommand{\corollaryname}{Corollary}
  \providecommand{\lemmaname}{Lemma}
  \providecommand{\remarkname}{Remark}
\providecommand{\theoremname}{Theorem}
\begin{document}

\title{Von Neumann Algebras of Sofic Groups with $\beta_{1}^{(2)}=0$ are
Strongly $1$-Bounded.}

\author{Dimitri Shlyakhtenko}

\address{Department of Mathematics, UCLA, Los Angeles, CA 90095, USA}

\email{shlyakht@math.ucla.edu}

\thanks{Research supported by NSF grant DMS-1500035.}
\begin{abstract}
We show that if $\Gamma$ is a finitely generated finitely presented
sofic group with zero first $L^{2}$ Betti number, then the von Neumann algebra $L(\Gamma)$
is strongly $1$-bounded in the sense of Jung. In particular, $L(\Gamma)\not\cong L(\Lambda)$
if $\Lambda$ is any group with free entropy dimension $>1$, for
example a free group. The key technical result is a short proof of
an estimate of Jung \cite{jung:BettiBounded} using non-microstates
entropy techniques.
\end{abstract}

\maketitle

\section{Introduction.}

In a series of papers \cite{dvv:entropy2,dvv:entropy3}, Voiculescu
introduced the notion of free entropy dimension, as an analog of Minkowski
content in free probability theory. If $X_{1},\dots,X_{n}\in(M,\tau)$
are a self-adjoint $n$-tuple of elements in a tracial von Neumann
algebra, $\delta_{0}(X_{1},\dots,X_{n})$ is a measure of ``how free
they are''. If $M=W^{*}(X_{1},\dots,X_{n})$ satisfies the Connes
embedding conjecture and is diffuse, $1\leq\delta_{0}(X_{1},\dots,X_{n})\leq n$.
The value $1$ is achieved e.g. when $M$ is hyperfinite \cite{jung:dimHyperfinite},
while a free semicircular $n$-tuple generating the free group factor
$L(\mathbb{F}_{n})$ has free entropy dimension $n$ \cite{dvv:entropy3}. 

The number $\delta_{0}(X_{1},\dots,X_{n})$ is an invariant of the
(non-closed) $*$-algebra generated by $X_{1},\dots,X_{n}$. In particular,
if $\Gamma$ is a finitely-generated discrete group, free entropy
dimension of any generating set of the group algebra gives us a group
invariant, $\delta_{0}(\mathbb{C}\Gamma)$ \cite{dvv:improvedrandom}.
As it turns out, $\delta_{0}(\mathbb{C}\Gamma)$ has a close relationship
with the first $L^{2}$ Betti number of $\Gamma$ (we refer the reader
to \cite{luck:book} for background on $L^{2}$-Betti numbers). Indeed,
$\delta_{0}(\mathbb{C}\Gamma)\leq\beta_{1}^{(2)}(\Gamma)+1$ for any
finitely generated infinite group $\Gamma$ \cite{connes-shlyakht:l2betti};
and if $\delta_{0}$ is replaced by its ``non-microstates analog''
$\delta^{*}$, then equality holds \cite{shlyakht-mineyev:freedim}.

Free entropy dimension has found a large variety of applications in
von Neumann algebra theory; for example, it was used by Voiculescu
to prove that free group factors $L(\mathbb{F}_{n})$ have no Cartan
subalgebras \cite{dvv:entropy3}. By now, there is a long list of
various properties that imply that any generators of a von Neumann
algebra $M$ have free entropy dimension $1$; we refer the reader
to Voiculescu's survey \cite{dvv:entropysurvey} and to \cite{jung:onebounded,hayes:oneBounded}. 

In \cite{jung:onebounded} Jung discovered a technical strengthening
of the statement that $\delta_{0}(X_{1},\dots,X_{n})=1$, called \emph{strong
$1$-boundedness}, which we will now briefly review. 

The free entropy dimension $\delta_{0}$ is defined by the formula
\cite{dvv:entropy3}
\[
\delta_{0}(X_{1},\dots,X_{n})=n-\limsup_{\epsilon\to0}\frac{\chi(X_{1}+\epsilon^{\frac{1}{2}}S_{1},\dots,X_{n}+\epsilon^{\frac{1}{2}}S:S_{1},\dots,S_{n})}{\log\epsilon^{\frac{1}{2}}}
\]
where $\chi$ stands for free entropy and $S_{1},\dots,S_{n}$ is
a free semicircular $n$-tuple, free from $X_{1},\dots,X_{n}$. The
statement that $\delta_{0}(X_{1},\dots,X_{n})\leq1$ then translates
into the estimate $\chi(X_{1}+\epsilon^{\frac{1}{2}}S_{1},\dots,X_{n}+\epsilon^{\frac{1}{2}}S_{n}:S_{1},\dots,S_{n})\leq(n-1)\log\epsilon^{\frac{1}{2}}+f(\epsilon)$
with $\limsup_{\epsilon}|f(\epsilon)/\log\epsilon|=0$. Strong $1$-boundedness of the set $X_1,\dots,X_n$ 
is a strengthening of this inequality: it requires that 
\[
\chi(X_{1}+\epsilon^{\frac{1}{2}}S_{1},\dots,X_{n}+\epsilon^{\frac{1}{2}}S_{n}:S_{1},\dots,S_{n})\leq(n-1)\log\epsilon^{\frac{1}{2}}+\textrm{const}
\]
for small $\epsilon$.

Jung proved the following amazing result: If $X=(X_{1},\dots,X_{n})$
is a strongly $1$-bounded set and $\chi(X_j)>-\infty$ for at least one $j$, then any other finite set  $Y=(Y_{1},\dots,Y_{m})\in W^{*}(X)$ satisfying $W^*(Y)=W^*(X)$ is also strongly $1$-bounded; thus in particular, $\delta_{0}(Y_{1},\dots,Y_{m})\leq 1$.

In his follow-up to Jung's work, Hayes \cite{hayes:oneBounded} improved this statement: if $X=(X_1,\dots,X_n)$ is strongly $1$-bounded and either $W^*(X)$ is amenable or $X$ is a non-amenability set, then any $Y=(Y_1,\dots,Y_m)$ with $W^*(Y)\cong W^*(X)$ must also be strongly $1$-bounded.  

We will say that a von Neumann algebra $M$ is strongly $1$-bounded if  $M=W^*(X)$  for some finite strongly $1$-bounded set $X$ which satisfies either of the conditions:  (a) $\chi(X_j)>-\infty$ for some $j$,  (b)  $W^*(X)$ is amenable, or (c)  $X$ is a non-amenability set.  Thus if $M$ is strongly $1$-bounded and $M=W^*(Y)$ for a finite set $Y$, then $Y$ is strongly $1$ bounded and in particular $\delta_0(Y)\leq 1$.

Jung's result means that strong $1$-boundedness of a von Neumann algebra can be checked on a set of its generators. With
few exceptions such as property (T) \cite{hlyakht-jung:freeEntropyPropertyT},
all known implications stating that some property (presence of a Cartan
subalgebra, tensor product decomposition, property $\Gamma$, etc.)
entails $\delta_{0}=1$ for any generating set can be upgraded to
state that the von Neumann algebra is actually strongly $1$-bounded.
We refer the reader to \cite{jung:onebounded,hayes:oneBounded} and
references therein.

The main theorem of this paper states that if a finitely generated
finitely presented sofic group $\Gamma$ satisfies $\beta_{1}^{(2)}(\Gamma)=0$, then the associated von Neumann
algebra is strongly $1$-bounded. In particular, any generating set
of the von Neumann algebra $L(\Gamma)$ has free entropy dimension
$1$. This implies a number of free indecomposability properties of
$L(\Gamma)$. For example, $L(\Gamma)\not\cong M*N$ with $M,N$ diffuse
von Neumann algebras; in particular, $L(\Gamma)\not\cong L(\mathbb{F}_{n})$
for any $n\in[2,+\infty]$. 

To obtain our result, we first give a new proof to a recent result
of Jung \cite{jung:BettiBounded} which allows one to deduce $\alpha$-boundedness
results for free entropy dimension under the assumption of existence
of algebraic relations. Jung's estimate can be considered an improved
version of the free entropy dimension estimates from \cite{connes-shlyakht:l2betti,shlyakht-charlesworth:noAtoms}
that were previously obtained using non-microstates free entropy methods,
but his proof relies on a number of highly technical microstates estimates.
Our proof is considerably shorter than Jung's, avoiding much of the
difficulty of dealing with matricial microstates. Indeed, we instead
produce estimates for the non-microstates Fisher information and non-microstates
free entropy and then use results of \cite{guionnet-biane-capitaine:largedeviations}
to deduce the corresponding microstates free entropy inequality and
strong boundedness of free entropy dimension. Finally, we apply our
estimate to the case of sofic groups.

\subsection*{Acknowledgments.}

I would like to thank Institut Mittag-Leffler and the Hausdorff Institute for Mathematics, where parts of this work were completed, for their support and hospitality. I am
also grateful to Ian Charlesworth and Ben Hayes for a number of useful
discussions.

\section{Estimates on Fisher Information and Free Entropy.}

\subsection{Preliminaries and notation.}

Throughout this section we write $X=(X_{1},\dots,X_{n})$ for an $n$-tuple
of self-adjoint variables in a tracial von Neumann algebra $(M,\tau)$.
Thus we write $W^{*}(X)$ for $W^{*}(X_{1},\dots,X_{n})$, etc. If
$S=(S_{1},\dots,S_{n})$ is another $n$-tuple we write $X+\sqrt{\epsilon}S$
for the $n$-tuple $(X_{1}+\sqrt{\epsilon}S_{1},\dots,X_{n}+\sqrt{\epsilon}S_{n})$.
We also write $\Vert X\Vert_{2}=(\sum\Vert X_{j}\Vert_{2}^{2})^{1/2}$.

We write $F=(F_{1},\dots,F_{k})$ for a vector-valued non-commutative
polynomial function of $n$ variables. Here each $F_{j}\in\mathbb{C}[t_{1},\dots,t_{n}]$
is a non-commutative polynomial. We write $\partial F$ for the $k\times n$
matrix $(\partial_{j}F_{i})_{ij}\in M_{k\times n}(\mathbb{C}[t_{1},\dots,t_{n}]\otimes\mathbb{C}[t_{1},\dots,t_{n}])$,
where $\partial_{j}$ are the non-commutative difference quotient
derivations determined by the Leibniz rule and $\partial_{j}t_{i}=\delta_{i=j}1\otimes1$.
We write $F(X)$, $\partial F(X)$, etc. when we evaluate these functions
by substituting $X=(X_{1},\dots,X_{n})$ for $(t_{1},\dots,t_{n})$. 

We also view $\partial F(X)$ as a linear operator in $M_{k\times n}(W^{*}(X)\bar{\otimes}W^{*}(X)^{o})$.
It lies in the commutant of the von Neumann algebra $W^{*}(X)\bar{\otimes}W^{*}(X)^{o}$
acting on $L^{2}(W^{*}(X)\otimes W^{*}(X)^{o})$ by $(a\otimes b)\cdot(\xi\otimes\eta)=\xi a\otimes b\eta$.
The rank  of $\partial F(X)$ is the Murray-von Neumann
dimension (over the algebra $W^{*}(X)\bar{\otimes}W^{*}(X)^{o}$ of
the closure of the image of $\partial F(X)$. 

Finally, for a tensor $Q=a\otimes b$ and an element $x$ we write
$Q\#x$ for $axb$. We use the same notation for the multiplication
in the von Neumann algebra $W^{*}(X)\bar{\otimes}W^{*}(X)^{o}.$ We
also use the same notation for $n$-tuples and matrices: if $Q=(Q_{ij})$
is an $l\times k$ matrix and $Y=(Y_{1},\dots,Y_{k})$ is a $k$-tuple,
we write $Q\#Y$ for the $l$-tuple $(\sum_{j=1}^{k}Q_{ij}\#Y_{j})_{i=1}^{l}$.
With this notation we have the following perturbative formula: $F(X+\sqrt{\epsilon}S)=F(X)+\sqrt{\epsilon}\partial F\#S+O(\epsilon)$;
it is sufficient to check its validity for monomial $F$, which is
straightforward.

\subsection{The main estimate on Fisher information $\Phi^{*}$. }

In this paper we will denote by $\log_{+}(t)$ the function given
by $\log_{+}(t)=t$ for $t>0$ and $\log_{+}(0)=0$. 
\begin{lem}
\label{lem:FisherInequality}Suppose that $F=(F_{1},\dots,F_{k})\in\mathbb{C}[t_{1},\dots,t_{n}]^{k}$
is a vector-valued polynomial non-commutative function of $n$ variables,
and suppose that for some $X=(X_{1},\dots,X_{n})\in(M,\tau)$, $F(X)=0$.
Assume that $\log_{+}[\partial F(X)^{*}\partial F(X)]\in L^{1}(W^{*}(X)\otimes W^{*}(X)^{o})$.

Let $S=(S_{1},\dots,S_{n})$ be a free semicircular family, free from
$X$. Then there exist $\epsilon_{0}>0$ and $f\in L^{1}[0,\epsilon_{0}]$
so that the free Fisher information satisfies the inequality
\[
\Phi^{*}(X+\sqrt{\epsilon}S)\geq\frac{\operatorname{rank}\partial F(X)}{\epsilon}+f(\epsilon),\qquad0<\epsilon<\epsilon_{0}.
\]
\end{lem}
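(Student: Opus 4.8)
The plan is to avoid microstates entirely and compute with conjugate variables. Write $Y=X+\sqrt{\epsilon}S$ and let $E=E_{W^{*}(Y)}$ be the trace-preserving conditional expectation. First I would record the exact conjugate-variable formula: the conjugate variable of $Y_{i}$ relative to $W^{*}(Y)$ equals $\frac{1}{\sqrt{\epsilon}}E(S_{i})$. This follows from the chain rule $\partial_{S_{i}}[P(Y)]=\sqrt{\epsilon}\,(\partial_{i}P)(Y)$ together with the fact that, since $S$ is a free semicircular family free from $X$, the conjugate variable of $S_{i}$ in $W^{*}(X,S)$ is $S_{i}$ itself; testing $\tau(S_{i}P(Y))=(\tau\otimes\tau)(\partial_{S_{i}}P(Y))$ against arbitrary polynomials $P$ yields $(\tau\otimes\tau)(\partial_{i}P(Y))=\frac{1}{\sqrt{\epsilon}}\tau(E(S_{i})P(Y))$. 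Since adding a free semicircular always gives finite Fisher information, the conjugate variables exist for every $\epsilon>0$ and
\[
\Phi^{*}(X+\sqrt{\epsilon}S)=\frac{1}{\epsilon}\sum_{i}\Vert E(S_{i})\Vert_{2}^{2}=\frac{1}{\epsilon}\bigl(n-\Vert S-E(S)\Vert_{2}^{2}\bigr).
\]
Thus it suffices to prove the complementary estimate $\Vert S-E(S)\Vert_{2}^{2}\leq(n-\operatorname{rank}\partial F(X))+\epsilon\,|f(\epsilon)|$ with $f\in L^{1}$; equivalently, that the projection of $S$ onto $L^{2}(W^{*}(Y))$ captures at least $\operatorname{rank}\partial F(X)$ of the $n$ available dimensions, up to an integrable error.

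The source of the captured dimensions is the relation $F(X)=0$. By the perturbative formula, $\frac{1}{\sqrt{\epsilon}}F(Y)=\partial F(X)\#S+O(\sqrt{\epsilon})$ in operator norm, and $F(Y)\in W^{*}(Y)$, so each component $(\partial F(X)\#S)_{l}$ lies within $O(\sqrt{\epsilon})$ of $W^{*}(Y)$. I would then identify the $S$-linear subspace $\mathcal{L}=\overline{\operatorname{span}}\{aS_{j}b\}\subset L^{2}(W^{*}(X,S))$ with $(L^{2}(N))^{n}$, where $N=W^{*}(X)\bar{\otimes}W^{*}(X)^{o}$, via the isometry $\Xi(\zeta_{1},\dots,\zeta_{n})=\sum_{j}\zeta_{j}\#S_{j}$. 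Under $\Xi$ the elements $S_{i}$ are the standard trace vectors $e_{i}$, and the $W^{*}(X)$-bimodule action on $\mathcal{L}$ corresponds to the module action of $N$, so that $\sum_{i}\langle pe_{i},e_{i}\rangle=(\operatorname{Tr}_{n}\otimes\tau_{N})(p)$ for any $N$-module projection $p$. The $W^{*}(X)$-bimodule generated by $\{(\partial F(X)\#S)_{l}\}$ corresponds to the submodule generated by the rows of $\partial F(X)$, whose Murray--von Neumann dimension equals $\operatorname{rank}\partial F(X)$. Because its generators nearly lie in $W^{*}(Y)$, the aim is to show $E$ nearly fixes this bimodule, which would give $\sum_{i}\Vert E(S_{i})\Vert_{2}^{2}\geq\operatorname{rank}\partial F(X)-(\text{error})$.

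The main obstacle is uniformity: ``$E$ nearly fixes the generators'' does not immediately yield ``$E$ nearly fixes the generated bimodule,'' since $E$ commutes with multiplication by $W^{*}(Y)$ but not by $W^{*}(X)$, and a fixed vector of the bimodule may require arbitrarily many generators to express. I would resolve this with a spectral cutoff. Let $q_{\delta}=1_{[\delta,\infty)}(|\partial F(X)|)$; on its range $|\partial F(X)|\geq\delta$, so the $\delta$-truncated part of the bimodule can be written using functional-calculus operators of norm $\leq1/\delta$ applied to $\partial F(X)\#S$. Approximating these operators in $\Vert\cdot\Vert_{2}$ by polynomials in $X\otimes X^{o}$ of operator norm $\leq1/\delta$ and using $\Vert X_{j}-Y_{j}\Vert=\sqrt{\epsilon}\Vert S_{j}\Vert=O(\sqrt{\epsilon})$ to replace $X$ by $Y$, one finds that the truncated bimodule lies within $O(\sqrt{\epsilon}/\delta)$ of $W^{*}(Y)$, while discarding the spectrum in $(0,\delta)$ costs $\mu((0,\delta))$, where $\mu$ is the spectral distribution of $|\partial F(X)|$ for $\operatorname{Tr}_{n}\otimes\tau_{N}$. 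This produces
\[
\Phi^{*}(X+\sqrt{\epsilon}S)\ \geq\ \frac{\operatorname{rank}\partial F(X)}{\epsilon}-\frac{\mu\bigl((0,\delta)\bigr)}{\epsilon}-\frac{C}{\sqrt{\epsilon}\,\delta}.
\]
Choosing $\delta=\delta(\epsilon)\to0$ (for instance $\delta=\epsilon^{1/4}$, so that the last term is $C\epsilon^{-3/4}\in L^{1}$) and substituting, the remaining contribution is governed by $\int_{0}\mu((0,\delta))\,\delta^{-1}\,d\delta=\int\log(\delta_{0}/\lambda)\,d\mu(\lambda)$, which is finite precisely under the hypothesis $\log_{+}[\partial F(X)^{*}\partial F(X)]\in L^{1}$. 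Hence the total error assembles into an $L^{1}$ function $f$, giving the claim. I expect the genuinely technical step to be exactly this transfer from $W^{*}(X)$- to $W^{*}(Y)$-multiplication uniformly across the spectral truncation, namely the bounded polynomial approximation of the functional-calculus operators together with the balancing of the two competing error scales $\mu((0,\delta))/\epsilon$ and $1/(\sqrt{\epsilon}\,\delta)$.
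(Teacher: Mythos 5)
Your setup is sound, and much of it coincides with the paper's own proof: the formula $\Phi^{*}(X+\sqrt{\epsilon}S)=\epsilon^{-1}\sum_{i}\Vert E(S_{i})\Vert_{2}^{2}$, the isometric identification of $\overline{\operatorname{span}}\,W^{*}(X)SW^{*}(X)$ with $(L^{2}(N))^{n}$, the spectral cutoff at a scale $\delta(\epsilon)$ tending to $0$ like a power of $\epsilon$, and the closing argument converting $\int_{0}\mu((0,\delta))\,\delta^{-1}d\delta<\infty$ into the $\log_{+}$-integrability hypothesis are all exactly the paper's ingredients. The gap is in the step you yourself flag as the technical heart: the claim that the $\delta$-truncated bimodule lies within $O(\sqrt{\epsilon}/\delta)$ of $W^{*}(Y)$ because the functional-calculus operators can be approximated in $\Vert\cdot\Vert_{2}$ by polynomials in $X\otimes X^{o}$ of operator norm $\leq1/\delta$ and then $X$ replaced by $Y$ at cost $O(\sqrt{\epsilon})$. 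The error incurred by substituting $Y$ for $X$ in a noncommutative polynomial $p$ is governed by the degree and the coefficient size of $p$, not by its operator norm: there is no inequality of the form $\Vert p(Y)-p(X)\Vert\leq C\Vert p(X)\Vert\sqrt{\epsilon}$. The polynomials needed to approximate $h_{\delta}(\partial F(X)^{*}\partial F(X))\partial F(X)^{*}$ have degree and coefficients that blow up as $\delta\to0$, in a way depending on the spectral data of $X$ that you cannot control; with $\delta=\delta(\epsilon)\to 0$ the substitution error is therefore not $O(\sqrt{\epsilon}/\delta)$, and your displayed lower bound is not established. (A secondary point: even granting closeness of the truncated module to $L^{2}(W^{*}(Y))$, the inequality $\sum_{i}\Vert E(S_{i})\Vert_{2}^{2}\geq\operatorname{rank}\partial F(X)-\mu((0,\delta))-\text{error}$ must be extracted by comparing $E(S_{i})$ with the projection of $S_{i}$ onto that module, i.e.\ precisely with the vectors given by these functional-calculus expressions.)

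The paper closes this gap by never transferring the spectral truncation to $W^{*}(Y)$. Instead of pushing the truncated module into $L^{2}(W^{*}(Y))$, it pulls the conjugate variable back: writing $\xi_{\epsilon}=\epsilon^{-1/2}E(S)$ and using that $E$ is a $W^{*}(Y)$-bimodule map together with $\Vert\partial F(X_{\epsilon})-\partial F(X)\Vert=O(\sqrt{\epsilon})$ --- legitimate because $\partial F$ is one fixed polynomial independent of $\epsilon$ --- one obtains the identity $\partial F(X)\#\xi_{\epsilon}=\epsilon^{-1/2}\partial F(X)\#S+O(1)$ in $L^{2}$. The cutoff operator $R_{\lambda}=f_{\lambda}(Q)$, $Q=\partial F(X)^{*}\#\partial F(X)$, is then applied to \emph{both sides of this identity}; since it acts on an equation rather than on a vector that must be shown to lie near $W^{*}(Y)$, no polynomial approximation and no $X\to Y$ substitution in the functional calculus is ever needed, and the only cost is the factor $\Vert R_{\lambda}\Vert\leq\lambda^{-1}$ multiplying the $O(1)$ error, which with $\lambda=\epsilon^{1/4}$ yields the integrable term $\epsilon^{-3/4}K$. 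To repair your route you would need a quantitative functional-calculus transfer (e.g.\ smoothing $h_{\delta}$ and a Helffer--Sj\"ostrand-type estimate giving errors $O(\sqrt{\epsilon}/\delta^{m})$), but the paper's rearrangement makes this unnecessary.
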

\begin{proof}
Denote by $E_{\epsilon}$ the conditional expectation from $W^{*}(X,S)$
to $W^{*}(X+\sqrt{\epsilon}S)$, and denote by the same letter the
extension of $E_{\epsilon}$ to $L^{2}$. Then (see \cite{dvv:entropy5})
\[
\Phi^{*}(X+\sqrt{\epsilon}S)=\Vert\xi_{\epsilon}\Vert_{2}^{2}
\]
where
\[
\xi_{\epsilon}=\epsilon^{-1/2}E_{\epsilon}(S).
\]

Since $F(X)=0$, using Taylor expansion we have that $F(X+\sqrt{\epsilon}S)=\sqrt{\epsilon}\partial F\#S+\zeta_{1}(\epsilon)$
with $\Vert\zeta_{1}(\epsilon)\Vert_{2}=O(\epsilon)$. Thus, using
contractivity of $E_{\epsilon}$ in $L^{2}$, we deduce: 
\begin{eqnarray}
E_{\epsilon}(\sqrt{\epsilon}\partial F(X)\#S) & = & E_{\epsilon}(F(X+\sqrt{\epsilon}S))+E_{\epsilon}(\zeta_{1}(\epsilon))\nonumber \\
 & = & F(X+\sqrt{\epsilon}S)+\zeta_{2}(\epsilon)\label{eq:eq1}\\
 & = & \sqrt{\epsilon}\partial F(X)\#S+\zeta_{3}(\epsilon)\nonumber 
\end{eqnarray}
with $\Vert\zeta_{j}(\epsilon)\Vert_{2}=O(\epsilon)$, $j=2,3$. Setting
$X_{\epsilon}=X+\sqrt{\epsilon}S$, we also have:
\begin{eqnarray*}
E_{\epsilon}(\sqrt{\epsilon}\partial F(X)\#S) & = & \sqrt{\epsilon}E_{\epsilon}(\partial F(X_{\epsilon})\#S)+\zeta_{4}(\epsilon)\\
 & = & \sqrt{\epsilon}\partial F(X_{\epsilon})\#E_{\epsilon}(S)+\zeta_{4}(\epsilon)\\
 & = & \sqrt{\epsilon}\partial F(X)\#E_{\epsilon}(S)+\zeta_{5}(\epsilon)
\end{eqnarray*}
where once again $\Vert\zeta_{j}(\epsilon)\Vert_{2}=O(\epsilon)$,
$j=4,5$. Combining this with (\ref{eq:eq1}) gives
\begin{eqnarray*}
\sqrt{\epsilon}\partial F(X)\#E_{\epsilon}(S) & = & \sqrt{\epsilon}\partial F(X)\#S+\zeta_{6}(\epsilon)
\end{eqnarray*}
where $\Vert\zeta_{6}(\epsilon)\Vert_{2}=O(\epsilon)$. Since $\xi_{\epsilon}=\epsilon^{-1/2}E_{\epsilon}(S)$,
it follows that
\[
\partial F(X)\#\xi_{\epsilon}=\epsilon^{-1/2}\partial F(X)\#S+\zeta_{\epsilon}',
\]
where $\Vert\zeta_{\epsilon}'\Vert_{2}=O(1)$. 

Let $E'$ be the orthogonal projection onto the $L^{2}$ closure of
$H=\operatorname{span}(W^{*}(X)SW^{*}(X))\subset L^{2}(W^{*}(X,S))$.
The map $Q\mapsto Q\#S$ is an isometric isomorphism of $L^{2}(W^{*}(X)\otimes W^{*}(X)^{o})$
with $H$ (as is easily verified by direct computation based on the
freeness condition). 

Since $H$ is invariant under left and right multiplication by variables from $X$,  $E'$ commutes with $\partial F(X)\#\cdot$ and thus
\[
\partial F(X)\#E'(\xi_{\epsilon})=\epsilon^{-1/2}\partial F(X)\#S+\zeta_{\epsilon}''
\]
with $\Vert\zeta''_{\epsilon}\Vert_{2}=O(1)$ and $\zeta_\epsilon''\in H$.
Applying $(\partial F)^{*}\#$ to both sides and denoting $(\partial F)^{*}\#(\partial F)$
by $Q$ finally gives us that there exists an $0<\epsilon_{0}<1$
and a constant $K$ for which
\[
Q\#E'(\xi_{\epsilon})=\epsilon^{-1/2}Q\#S+\zeta_{\epsilon},\qquad0<\epsilon<\epsilon_{0}
\]
with $\zeta_{\epsilon}\in H$, $\Vert\zeta_{\epsilon}\Vert_{2}\leq K/2n^{1/2}$. 

Let $P_{\lambda}=\chi_{[\lambda,+\infty)}(Q)$ and denote by $R_{\lambda}$
the operator $f_{\lambda}(Q)$ where $f(x)=x^{-1}$ for $x\geq\lambda$
and $f(x)=0$ for $0\leq x<\lambda$. Then $R_{\lambda}Q=P_{\lambda}$.

Since projections are contractive on $L^{2}$, it follows that for
any $\lambda>0$, $\Vert P_{\lambda}\#S\Vert_{2}\leq\Vert S\Vert_{2}=n^{1/2}$
and furthermore 
\begin{eqnarray*}
\Vert\xi_{\epsilon}\Vert_{2}^{2} & \geq & \Vert P_{\lambda}E'(\xi_{\epsilon})\Vert_{2}^{2}\\
 & = & \Vert R_{\lambda}\#Q\#E_{1}(\xi_{\epsilon})\Vert_{2}^{2}\\
 & = & \Vert\epsilon^{-1/2}R_{\lambda}\#Q\#S+R_{\lambda}\#\zeta_{\epsilon}\Vert_{2}^{2}\\
 & \geq & \epsilon^{-1}\tau(P_{\lambda})-2\epsilon^{-1/2}|\langle P_{\lambda}\#S,R_{\lambda}\#\zeta_{\epsilon}\rangle|\\
 & \geq & \epsilon^{-1}\tau(P_{\lambda})-\epsilon^{-1/2}\lambda^{-1}K.
\end{eqnarray*}
Let $r$ be the rank of $\partial F(X)$ (i.e., $r=\lim_{\lambda\to0}\tau(P_{\lambda})$,
where $\tau$ denotes the non-normalized trace on $M_{n\times n}(W^{*}(X)\bar{\otimes}W^{*}(X)^{o})$),
and set $\lambda=\epsilon^{1/4}$. Let $\phi(\lambda)=\tau(1-P_{\lambda})$.
Then
\begin{eqnarray*}
\Vert\xi_{\epsilon}\Vert_{2}^{2} & \geq & \epsilon^{-1}(r-\phi(\lambda))-\epsilon^{-1/2}\lambda^{-1}K\\
 & = & \frac{r}{\epsilon}-\epsilon^{-1}\phi(\epsilon^{1/4})-\epsilon^{-3/4}K\\
 & = & \frac{r}{\epsilon}-\epsilon^{-1}\phi(\epsilon^{1/4})+f_{1}(\epsilon)
\end{eqnarray*}
 for some $f_{1}\in L^{1}[0,\epsilon_{0}]$. Thus to conclude the
proof, it is enough to show that $\epsilon^{-1}\phi(\epsilon^{1/4})$
is integrable on $[0,\epsilon_{0}]$. 

Let $d\mu(t)$ be the composition of the trace $\tau$ with the spectral
measure of $Q$, so that $\phi(\lambda)=\int_{0^{+}}^{\lambda}d\mu(t)$.
Let us substitute $u=\epsilon^{1/4}$ (so $du=(1/4)\epsilon^{-3/4}d\epsilon$,
$u_{0}=\epsilon_{0}^{1/4})$ into the integral expression for the
$L^{1}$ norm of $\epsilon^{-1}\phi(\epsilon^{1/4})$:
\begin{eqnarray*}
\int_{0^{+}}^{\epsilon_{0}}\frac{1}{\epsilon}\phi(\epsilon^{1/4})d\epsilon & = & \frac{1}{4}\int_{0^{+}}^{u_{0}}\frac{1}{u}\phi(u)du\\
 & = & \frac{1}{4}\int_{0^{+}}^{u_{0}}(\partial_{u}\log u)\int_{0}^{u}d\mu(t)\ du-\left[\frac{1}{4}\phi(u)\log u\right]_{0^{+}}^{u_{0}}\\
 & \leq & \frac{1}{4}\int_{0}^{u_{0}}|\log_{+}t|d\mu(t)+\frac{1}{4}\phi(u_{0})|\log u_{0}|+\frac{1}{4}\lim_{\delta\to0}|\log\delta|\phi(\delta).
\end{eqnarray*}
Since by assumption $\Vert\log_{+}Q\Vert_{1}<\infty$, the intergral
$\int_{0}^{u_{0}}|\log_{+}t|d\mu(t)$ is finite. Since if $\delta<1$,
$|\log\delta|\leq|\log t|$ for $t\in(0,\delta]$, 
\begin{eqnarray*}
|\log\delta|\phi(\delta) & = & \int_{0^{+}}^{\delta}|\log\delta|d\mu(t)\leq\int_{0}^{\delta}|\log_{+}t|d\mu(t)
\end{eqnarray*}
we conclude also that $\lim_{\delta\to0}|\log\delta|\phi(\delta)$
is finite. Thus $\epsilon^{-1}\phi(\epsilon^{1/4})\in L^{1}[0,\epsilon_{0}]$. \end{proof}
\begin{rem}
If one drops the assumption that $\log_{+}[\partial F(X)^{*}\partial F(X)]\in L^{1}(W^{*}(X)\otimes W^{*}(X))$,
the proof of Lemma \ref{lem:FisherInequality} yields the estimate
\[
\Phi^{*}(X+\sqrt{\epsilon}S)\geq\epsilon^{-1}\tau(P_{\lambda})-\epsilon^{-1/2}\lambda^{-1}K,
\]
where $\lambda$ is arbitrary. This readily implies that $\delta^{\star}(X)\leq n-\tau(P_{\lambda})$
for all $\lambda$, which shows that $\delta^{*}(X)\leq n-\operatorname{rank}\partial F(X)$.
This estimate can be easily obtained from \cite{connes-shlyakht:l2betti}
by noting that for any finite-rank operator $T\in FR$, 
\[
0=[F_{j}(X),T]=\sum_{k}[\partial_{k}F_{j}(X)\#T,X_{k}]
\]
showing that the dimension of the $L^{2}$ closure of the space
\[
\{(T_{1},\dots,T_{n})\in FR^{n}:\sum_{j}[T_{j},X_{j}]=0\}
\]
is at least the rank of $\partial F$ (compare \cite{shlyakht-charlesworth:noAtoms}). 
\end{rem}

\begin{rem}
The proof of Lemma \ref{lem:FisherInequality} suggests the following
precise expression for the short-time asymptotics of the conjugate
variable $\xi_{\epsilon}$: 
\[
\xi_{\epsilon}=\epsilon^{-1/2}F\#S+\xi_{\epsilon}'+O(\epsilon^{1/2})
\]
where $F=(F_{ij})_{i,j=1}^{n}$ is the projection onto the $L^{2}$
closure of the space $\{(T_{1},\dots,T_{n})\in FR^{n}:\sum_{j}[T_{j},X_{j}]=0\}$
and $\xi_{\epsilon}'=\partial_{\epsilon}^{*}(\delta_{ij}1\otimes1-F)$.
Indeed, the intuition is that $F$ is the projection onto the space
of $L^{2}$ cycles, which is perpendicular to the space of $L^{2}$
derivations ($=\ker F$). The kernel of $F$ is a
kind of ``maximal domain of definition'' of $\partial_{\epsilon=0}^{*}$
; then $\xi'_{\epsilon}=E_{\epsilon}(\partial_{\epsilon=0}^{*}((\delta_{ij}1\otimes1)_{ij}-F))$
is the ``bounded part'' of $\xi_{\epsilon}$ while $\epsilon^{-1/2}F\#S$
is the ``unbounded part'' (compare \cite{shlyakht:qdim}). However,
we were unable to prove this exact formula. 
\end{rem}

\subsection{Estimates on free entropy.}
\begin{lem}
\label{lem:EntropyInequality}Under the hypothesis of Lemma \ref{lem:FisherInequality},
there exists $K<\infty$, $\epsilon_{0}>0$, so that for all $0<\epsilon<\epsilon_{0}$,
the non-microstates free entropy satisfies:
\[
\chi^{*}(X+\sqrt{\epsilon}S)\leq(\log\epsilon^{\frac{1}{2}})(\operatorname{rank}\partial F(X))+K
\]
In particular, the microstates entropy also satisfies
\[
\chi(X+\sqrt{\epsilon}S:S)\leq\chi(X+\sqrt{\epsilon}S)\leq(\log\epsilon^{\frac{1}{2}})(\operatorname{rank}\partial F(X))+K.
\]
\end{lem}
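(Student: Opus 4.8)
**The plan is to integrate the Fisher information bound of Lemma~\ref{lem:FisherInequality} to obtain the free entropy estimate.** The key tool is the standard relationship between the non-microstates free entropy $\chi^{*}$ and the free Fisher information $\Phi^{*}$ along the semicircular evolution, namely the integral formula
\[
\chi^{*}(Y)=\frac{1}{2}\int_{0}^{\infty}\left(\frac{n}{1+t}-\Phi^{*}(Y+\sqrt{t}S)\right)dt+\frac{n}{2}\log(2\pi e),
\]
where $S$ is free semicircular, free from $Y$. I would apply this with $Y=X+\sqrt{\epsilon}S$, using a single semicircular family (one exploits the fact that the sum of two free semicirculars scaled by $\sqrt{\epsilon}$ and $\sqrt{t}$ is again semicircular of variance $\epsilon+t$, so that $\Phi^{*}$ evaluated along the combined evolution reduces to $\Phi^{*}(X+\sqrt{\epsilon+t}\,S)$).

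\medskip

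\textbf{The main steps, in order.} First I would rewrite $\chi^{*}(X+\sqrt{\epsilon}S)$ as an integral of $\Phi^{*}(X+\sqrt{s}\,S)$ over $s\in[\epsilon,\infty)$ against $ds$, picking up the change of variables $s=\epsilon+t$. Concretely,
\[
\chi^{*}(X+\sqrt{\epsilon}S)=\frac{1}{2}\int_{\epsilon}^{\infty}\left(\frac{n}{s}-\Phi^{*}(X+\sqrt{s}\,S)\right)ds+C_{0},
\]
for a constant $C_{0}$ independent of $\epsilon$ (here I am using that $\Phi^{*}$ along the semicircular flow has the clean scaling $\Phi^{*}(X+\sqrt{s}S)$ and that the tail integral converges). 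Second, I would split the integral at $\epsilon_{0}$: on $[\epsilon_{0},\infty)$ the integral contributes a fixed constant, so all the $\epsilon$-dependence lives in $\int_{\epsilon}^{\epsilon_{0}}$. Third, on $[\epsilon,\epsilon_{0}]$ I substitute the lower bound from Lemma~\ref{lem:FisherInequality}, $\Phi^{*}(X+\sqrt{s}S)\geq r/s+f(s)$ with $r=\operatorname{rank}\partial F(X)$ and $f\in L^{1}[0,\epsilon_{0}]$, giving
\[
\frac{1}{2}\int_{\epsilon}^{\epsilon_{0}}\left(\frac{n}{s}-\Phi^{*}(X+\sqrt{s}S)\right)ds\leq\frac{1}{2}\int_{\epsilon}^{\epsilon_{0}}\frac{n-r}{s}\,ds-\frac{1}{2}\int_{\epsilon}^{\epsilon_{0}}f(s)\,ds.
\]
The first term evaluates to $\tfrac{1}{2}(n-r)(\log\epsilon_{0}-\log\epsilon)$, whose $\epsilon$-dependent part is $-\tfrac{1}{2}(n-r)\log\epsilon=(n-r)\log\epsilon^{-1/2}$; the second term is bounded uniformly in $\epsilon$ because $f\in L^{1}[0,\epsilon_{0}]$. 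Finally, I collect the $\log\epsilon^{1/2}$ contributions. Writing $\tfrac{n}{2}\int_{\epsilon}^{\infty}\frac{ds}{s}$ — which is already baked into the subtraction above via the $n/s$ term — the net coefficient of $\log\epsilon^{1/2}$ on the right is $-(n-r)+n=r$ after bookkeeping; i.e., the surviving $\epsilon$-dependence is exactly $(\log\epsilon^{1/2})\cdot r$, and everything else is bounded by a constant $K$. This yields $\chi^{*}(X+\sqrt{\epsilon}S)\leq(\log\epsilon^{1/2})(\operatorname{rank}\partial F(X))+K$.

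\medskip

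\textbf{The last sentence of the statement} then follows from two standard inequalities: the general bound $\chi\leq\chi^{*}$ relating microstates to non-microstates free entropy (due to Biane--Capitaine--Guionnet, the reference \cite{guionnet-biane-capitaine:largedeviations}), and the obvious monotonicity $\chi(X+\sqrt{\epsilon}S:S)\leq\chi(X+\sqrt{\epsilon}S)$ coming from taking microstates in the presence of versus without the side information $S$.

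\medskip

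\textbf{The step I expect to be the main obstacle} is justifying the integral representation of $\chi^{*}$ carefully enough to handle the lower endpoint at $\epsilon$ and the behavior of the integrand as $s\to\infty$, together with making the bookkeeping of the logarithmic coefficients precise. The delicate point is that the inequality from Lemma~\ref{lem:FisherInequality} is only valid on $[0,\epsilon_{0}]$ and only as a lower bound, so I must be sure that replacing $\Phi^{*}$ by its lower bound goes in the correct direction inside $\tfrac{n}{s}-\Phi^{*}$ (it does: a lower bound on $\Phi^{*}$ gives an upper bound on $\tfrac{n}{s}-\Phi^{*}$, hence on $\chi^{*}$). I also need that $\Phi^{*}(X+\sqrt{s}\,S)$ is finite for all $s>0$ and that the tail integral $\int_{\epsilon_{0}}^{\infty}(\tfrac{n}{s}-\Phi^{*})ds$ converges to a finite constant; both are standard consequences of the semicircular regularization making $\Phi^{*}$ finite and of the large-$s$ asymptotics $\Phi^{*}(X+\sqrt{s}S)\sim n/s$.
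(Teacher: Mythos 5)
Your argument is essentially the paper's proof: the paper likewise integrates the Fisher bound of Lemma \ref{lem:FisherInequality} through Voiculescu's formula $\chi^{*}(X+\sqrt{\epsilon}S)=\frac{1}{2}\int_{0}^{\infty}\bigl(\frac{n}{1+t}-\Phi^{*}(X+\sqrt{\epsilon+t}S)\bigr)dt+\mathrm{const}$, truncates the integral at $\epsilon_{0}$, absorbs the $L^{1}$ error term $f$ into the constant, and deduces the microstates statement from $\chi(\cdot:S)\leq\chi\leq\chi^{*}$ with the second inequality from \cite{guionnet-biane-capitaine:largedeviations}. One bookkeeping point to repair: after the substitution $s=\epsilon+t$ the comparison term is $\frac{n}{1+s-\epsilon}$, not $\frac{n}{s}$, so your displayed identity with a constant $C_{0}$ independent of $\epsilon$ is off by exactly $\frac{n}{2}\log\epsilon=n\log\epsilon^{1/2}$ (and $\int_{\epsilon}^{\infty}\frac{n}{s}\,ds$ diverges as written); your final tally $-(n-r)+n=r$ silently reinstates that missing term, so the coefficient $r=\operatorname{rank}\partial F(X)$ is correct, but the cleaner route (the paper's) is to note that the $\frac{n}{1+t}$ term contributes only $O(1)$ on $[0,\epsilon_{0}]$ and let the entire $\log\epsilon^{1/2}$ coefficient come from $-\frac{1}{2}\int_{\epsilon}^{\epsilon_{0}}\frac{r}{s}\,ds$.
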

\begin{proof}
Let $\epsilon_{0}$, $f$ be as in Lemma \ref{lem:FisherInequality};
set $K'=\Vert f\Vert_{L^{1}[0,\epsilon_{0}]}$. By definition \cite{dvv:entropy5},
up to a universal constant depending only on $n$,

\[
\chi^{*}(X+\sqrt{\epsilon}S)=\frac{1}{2}\int_{0}^{\infty}-\Phi^{*}(X+\sqrt{\epsilon+t}S)+\frac{n}{1+t}dt
\]
Thus up to a finite universal constant,
\begin{eqnarray*}
\chi^{*}(X+\sqrt{\epsilon}S) & = & \frac{1}{2}\int_{\epsilon}^{\epsilon_{0}}-\Phi^{*}(X+\sqrt{t}S)dt\\
 & \leq & \frac{1}{2}\int_{\epsilon}^{\epsilon_{0}}-\frac{\operatorname{rank}(\partial F(X))}{t}+K'\\
 & = & (\log\epsilon^{\frac{1}{2}})(\operatorname{rank}\partial F(X))-\frac{1}{2}\log\epsilon_{0}+K',
\end{eqnarray*}
where we used the inequality of Lemma \ref{lem:FisherInequality}
to pass from the first to the second line.

The inequality for the microstates entropy is due to the general inequalities
$\chi(Z:Y)\leq\chi(Z)\leq\chi^{*}(Z)$; the first is trivial and the
second is a deep result of Biane, Capitaine and Guionnet \cite{guionnet-biane-capitaine:largedeviations}.
\end{proof}

\subsection{$r$-boundedness.}

The main consequence of our estimates is the following Theorem, originally
due to Jung \cite[Theorem 6.9]{jung:BettiBounded}. We give a short
alternative proof based on our estimates for non-microstates entropy.
\begin{thm}
\label{thm:StronglyBounded}Suppose that $F=(F_{1},\dots,F_{k})$
is a vector-valued polynomial non-commutative function of $n$ variables,
and suppose that for some $X=(X_{1},\dots,X_{n})\in(M,\tau)$, $F(X)=0$.
Assume that $\log_{+}[\partial F(X)^{*}\partial F(X)]\in L^{1}(W^{*}(X)\otimes W^{*}(X))$.
Then $X$ is $n-\operatorname{rank}(\partial F(X))$ bounded.\end{thm}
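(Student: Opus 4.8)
The plan is to deduce $r$-boundedness (in the sense of Jung, where $r = n - \operatorname{rank}\partial F(X)$) directly from the entropy estimate of Lemma \ref{lem:EntropyInequality} by unwinding the definition of strong $r$-boundedness. Recall that the relevant quantity controlling strong boundedness is the free entropy dimension regularized by semicircular perturbation: one considers $\chi(X + \sqrt{\epsilon}S : S)$ and examines its behavior as $\epsilon \to 0$. The definition of strong $1$-boundedness (and its natural $r$-bounded generalization) requires precisely an inequality of the form $\chi(X+\sqrt{\epsilon}S : S) \leq (n-1)\log\epsilon^{1/2} + \mathrm{const}$, or more generally $\leq r \log\epsilon^{1/2} + \mathrm{const}$ with the constant uniform in small $\epsilon$. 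Since Lemma \ref{lem:EntropyInequality} already furnishes exactly
\[
\chi(X+\sqrt{\epsilon}S : S) \leq (\log\epsilon^{1/2})(\operatorname{rank}\partial F(X)) + K,
\]
the content of the theorem is essentially a matter of reading off the coefficient and matching it against the definition.

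First I would recall precisely Jung's definition of the \emph{strong $r$-bounded} condition, which packages the free entropy dimension estimate together with the requirement that the bounding constant be genuinely bounded (not merely $o(\log\epsilon)$). The key numerical point is that the coefficient appearing in Lemma \ref{lem:EntropyInequality} is $\operatorname{rank}\partial F(X)$ multiplying $\log\epsilon^{1/2}$; rewriting, we want to see this as $n - r$ copies of $\log\epsilon^{1/2}$ being subtracted relative to the ``free'' value. Indeed, writing $r = n - \operatorname{rank}\partial F(X)$, the estimate reads $\chi(X+\sqrt{\epsilon}S:S) \leq (n-r)\log\epsilon^{1/2} + K$, which is exactly the defining inequality for strong $r$-boundedness once one identifies $(n-r)$ as the rank term. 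So the second step is this bookkeeping identification, together with verifying that the constant $K$ from Lemma \ref{lem:EntropyInequality} is independent of $\epsilon$ for $\epsilon < \epsilon_0$, which it is by construction.

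The main obstacle, and the only genuine subtlety, is ensuring that the hypothesis $\chi(X_j) > -\infty$ (or one of Hayes's alternative conditions) needed to invoke the strong-boundedness machinery is available or else that Jung's definition of $r$-boundedness does not require it at the level of the set $X$ itself. Here I would note that the perturbed tuple $X + \sqrt{\epsilon}S$ automatically has finite free entropy for each fixed $\epsilon > 0$, since adding a free semicircular family regularizes the microstates and guarantees $\chi(X_j + \sqrt{\epsilon}S_j) > -\infty$; this finiteness is precisely what makes the strong-boundedness constant meaningful and is standard in this context. The remaining task is then to confirm that the inequality holds with the correct constant uniformly, which follows directly from Lemma \ref{lem:EntropyInequality} without any further estimation.

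In summary, the proof is short: invoke Lemma \ref{lem:EntropyInequality} to obtain the entropy bound with coefficient $\operatorname{rank}\partial F(X)$, substitute $r = n - \operatorname{rank}\partial F(X)$ to rewrite the coefficient as $n - r$, observe that the additive constant $K$ is $\epsilon$-independent, and match this against Jung's definition of strong $r$-boundedness. The deeper analytic work has all been absorbed into the Fisher information estimate of Lemma \ref{lem:FisherInequality} and the microstates/non-microstates comparison $\chi \leq \chi^*$ of Biane--Capitaine--Guionnet; the theorem itself is the clean payoff of that preparation.
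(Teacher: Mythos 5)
Your proposal is correct and follows essentially the same route as the paper: the paper's proof likewise consists of invoking Lemma \ref{lem:EntropyInequality} to obtain $\limsup_{\epsilon\to0}\chi(X+\sqrt{\epsilon}S:S)+\operatorname{rank}(\partial F(X))|\log\epsilon^{\frac{1}{2}}|\leq K<\infty$ and then appealing to the characterization of $r$-boundedness in \cite[Corollary 1.4]{jung:onebounded}. Your discussion of the $\chi(X_j)>-\infty$ hypothesis is a harmless aside --- as you correctly conclude, that condition enters only in the propagation results for generating sets, not in the definition of $r$-boundedness of the set $X$ itself.
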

\begin{proof}
By Lemma \ref{lem:EntropyInequality}, 
\[
\limsup_{\epsilon\to0}\chi(X+\sqrt{\epsilon}S:S)+\operatorname{rank}(\partial F(X))|\log\epsilon^{\frac{1}{2}}|\leq K<\infty.
\]
Thus by the last bullet point in \cite[Corollary 1.4]{jung:onebounded},
$X$ is $n-\operatorname{rank}(\partial F(X))$-bounded. \end{proof}
\begin{rem}
Motivated by \cite[Corollary 1.4]{jung:onebounded} one can make the
following definition: $X$ is  $r$-bounded for $\delta^{*}$ (resp.
$\delta^{\star}$) if for some $K$ and all $0<\epsilon<\epsilon_{0}$,
\[
\chi^{*}(X+\sqrt{\epsilon}S)\leq(n-r)\log\epsilon^{\frac{1}{2}}+K
\]
(respectively, $\Phi^{*}(X+\sqrt{\epsilon}S)\geq\epsilon^{-1}(n-r)+\phi(\epsilon)$,
$0<\epsilon<\epsilon_{0}$ with $\phi\in L^{1}[0,\epsilon_{0}]$).
Then under the hypothesis of Theorem \ref{thm:StronglyBounded}, $X$
is  $n-\operatorname{rank}(\partial F(X))$-bounded for $\delta^{*}$
and $\delta^{\star}$. 
\end{rem}

\section{Applications.}
\begin{lem}
\label{lemma:Betti}Let $\Gamma$ be a finitely generated finitely
presented group. Then there exists an $n$-tuple $X=(X_{1},\dots,X_{n})$
of self-adjoint elements in $\mathbb{Q}\Gamma\subset L\Gamma$ and
a vector-valued polynomial function $F=(F_{1},\dots,F_{k})$ with
$F_{j}\in\mathbb{Q}[t_{1},\dots,t_{n}]$ so that $F(X)=0$ and moreover
\[
\operatorname{rank}\partial F(X)=n-(\beta_{1}^{(2)}(\Gamma)+\beta_{0}^{(2)}(\Gamma)-1),
\]
where $\beta_{j}^{(2)}(\Gamma)$ are the $L^{2}$-Betti numbers of
$\Gamma$. \end{lem}
\begin{proof}
Let $g_{1},\dots,g_{m}$ be generators of $\Gamma$ and let $n=2m$.
Consider the following generators $X_{1},\dots,X_{n}$ for the group
algebra $\mathbb{C}\Gamma$: $X_{j}=g_{j}+g_{j}^{-1}$, $X_{m+j}=i(g_{j}-g_{j}^{-1})$,
$j=1,\dots,m$. 

Denote by $h_{1},\dots,h_{m}$ the generators of the free group $\mathbb{F}_{m}$,
and let $Y_{j}=h_{j}+h_{j}^{-1}$, $Y_{n+j}=i(h_{j}-h_{j}^{-1})$,
$j=1,\dots,m$ be generators of $\mathbb{C}\mathbb{F}_{m}$. Let $R_{1},\dots,R_{p}\in\mathbb{F}_{m}$
be the relations satisfied by the generators $g_{1},\dots,g_{m}$
of $\Gamma$. 

Denote by $t_{1},\dots,t_{n}$ the generators of the algebra $\mathscr{A}=\mathbb{C}[t_{1},\dots,t_{2n}]$
of non-commutative polynomials in $2n$ variables. Then there is a
canonical map from $\mathscr{A}\to\mathbb{C}\mathbb{F}_{m}$ given
by $t_{j}\mapsto Y_{j}$. The algebra $\mathbb{C}\mathbb{F}_{m}$
is then the quotient of $\mathscr{A}$ by the ideal $J_{0}$ generated
by the relations corresponding to the relations $h_{j}^{-1}h_{j}=h_{j}h_{j}^{-1}=1$
that hold in $\mathbb{C}\mathbb{F}_{m}$; written in terms of the
generators $t_{j}$ these relations take the form $F'_{j}=(t_{j}+t_{m+j})(t_{j}-t_{m+j})+4i$,
$F''_{j}=(t_{j}-t_{m+j})(t_{j}+t_{m+j})+4i$, $j=1,\dots,m$. 

The relations $R_{j}$ can be interpreted as polynomials (with rational
coefficients) in $t_{1},\dots,t_{n}$ by substituting $\frac{1}{2}(t_{j}+t_{m+j})$
for $h_{j}$ and $\frac{1}{2i}(t_{j}-t_{m+j})$ for $h_{j}^{-1}$.
Let $F'''_{j}=R_{j}-1$, $j=1,\dots,p$. Let $J$ be the ideal in
$\mathscr{A}$ generated by $J_{0}$ and $F'''_{j}$, $j=1,\dots,p$.
Then $\mathbb{C}\Gamma$ is precisely the quotient of $\mathscr{A}$
by $J$, the quotient map sending $t_{j}$ to $X_{j}$. Let $F=(F_{j}''')_{j=1}^{p}\sqcup(F_{j}'')_{j=1}^{n}\sqcup(F_{j}')_{j=1}^{n}$,
where $\sqcup$ refers to union of ordered tuples. 

Let $\delta':\mathscr{A}\to L^{2}(\mathbb{C}\Gamma\otimes\mathbb{C}\Gamma^{o})$
be a derivation. If for all $j=1,\dots,p+2n$, $\delta'(F_{j})=0$
then for any $x=aF_{j}b\in J$, 
\[
\delta'(x)=\delta'(a)(F_{j})b+a(F_{j})\delta'(b)+a\delta'(F_{j})b=0
\]
since $F_{j}$ acts by zero on both the right and the left of $L^{2}(\mathbb{C}\Gamma\otimes\mathbb{C}\Gamma^{o})$
and $\delta'(F_{j})=0$ by assumption; thus $\delta'(J)=0$. Conversely,
if $\delta'(J)=0$ then $\delta(F_{j})=0$ for all $j$. It follows
that $\delta'$ descends to a derivation $\delta:\mathbb{C}\Gamma\to L^{2}(\mathbb{C}\Gamma\otimes\mathbb{C}\Gamma^{o})$
iff $\delta'(F_{j})=0$ for all $j=1,\dots,p+2n$. 

Let now $Q_{l}\in L^{2}(\mathbb{C}\Gamma\otimes\mathbb{C}\Gamma^{o})$,
$l=1,\dots,n$. Then there exists a unique derivation $\delta':\mathscr{A}\to L^{2}(\mathbb{C}\Gamma\otimes\mathbb{C}\Gamma^{o})$
so that $\delta'(t_{l})=Q_{l}$ for all $l$. This derivation descends
to a derivation $\delta$ on $\mathbb{C}\Gamma$ (satisfying $\delta(X_{l})=Q_{l}$)
iff for all $j$,
\[
0=\delta'(F_{j})=\partial F_{j}(X)\#Q,
\]
i.e., $Q\in\ker\partial F(X)$. It follows that the space of derivations
$Z^{1}(\mathbb{C}\Gamma,L^{2}(\mathbb{C}\Gamma\otimes\mathbb{C}\Gamma^{o}))$
is isomorphic to $\ker\partial F(X)\subset(L^{2}(\mathbb{C}\Gamma\otimes\mathbb{C}\Gamma^{o}))^{\oplus n}$. 

Let $\alpha$ denote the action of $L(\Gamma\times\Gamma^{o})$ on
$L^{2}(\mathbb{C}\Gamma\otimes\mathbb{C}\Gamma^{o})$ given by $(g\times h)\cdot(\xi\otimes\eta)=\rho(g)\xi\otimes\rho(h)\eta$,
where $\rho$ is the right regular representation. If $\delta:\mathbb{C}\Gamma\to L^{2}(\mathbb{C}\Gamma\otimes\mathbb{C}\Gamma^{o})$,
then $\alpha_{x}\circ\delta$ is again a derivation, for any $x\in L(\Gamma\times\Gamma^{o})$.
It follows that $L(\Gamma\times\Gamma^{o})$ acts on the space of
these $L^{2}$ derivations; moreover, the isomorphism $Z^{1}(\mathbb{C}\Gamma,L^{2}(\mathbb{C}\Gamma\otimes\mathbb{C}\Gamma^{o}))\cong\ker\partial F(X)$
is $\alpha$-equivariant. 

It is easily seen that
\[
\dim_{L(\Gamma\times\Gamma^{o})}Z^{1}(\mathbb{C}\Gamma,L^{2}(\mathbb{C}\Gamma\otimes\mathbb{C}\Gamma^{o}))=\beta_{1}^{(2)}(\Gamma)-\beta_{0}^{(2)}(\Gamma)+1
\]
(see e.g. \cite{connes-shlyakht:l2betti,shlyakht-mineyev:freedim};
note that $\beta_{1}^{(2)}(\Gamma)$ is the dimension of the space
of derivations which are not inner, while $1-\beta_{0}^{(2)}(\Gamma)$
is the dimension of the space of inner derivations). 

Putting things together, we obtain that
\[
\beta_{1}^{(2)}(\Gamma)-\beta_{0}^{(2)}(\Gamma)+1=\dim_{L(\Gamma\times\Gamma^{o})}\ker\partial F(X)=n-\operatorname{rank}\partial F(X)
\]
as claimed. Note that by construction $F\in\mathbb{Q}[t_{1},\dots,t_{n}]$. \end{proof}
\begin{thm}
\label{thm:BettiBounded}Let $\Gamma$ be a finitely generated, finitely
presented sofic group, and let $r=\beta_{1}^{(2)}(\Gamma)-\beta_{0}^{(2)}(\Gamma)+1$
. Then there exists a set of generators for $\mathbb{C}\Gamma$ which
is  $r$-bounded. In particular, if $|\Gamma|=\infty$ and if $\beta_{1}^{(2)}(\Gamma)=0$, then $L(\Gamma)$
is a strongly $1$-bounded von Neumann algebra.\end{thm}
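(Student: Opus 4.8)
The plan is to read off a generating set and its relations from Lemma~\ref{lemma:Betti}, and then feed them into Theorem~\ref{thm:StronglyBounded}; the one hypothesis of that theorem not already supplied by Lemma~\ref{lemma:Betti} is the log-integrability condition $\log_+[\partial F(X)^*\partial F(X)]\in L^1(W^*(X)\bar{\otimes}W^*(X)^o)$, and establishing it is exactly where soficity enters. Thus I would first invoke Lemma~\ref{lemma:Betti} to produce a self-adjoint $n$-tuple $X=(X_1,\dots,X_n)$ generating $\mathbb{C}\Gamma$ with each $X_j\in\mathbb{Q}\Gamma$, together with a vector-valued polynomial $F$ with rational coefficients satisfying $F(X)=0$ and
\[
n-\operatorname{rank}\partial F(X)=\beta_1^{(2)}(\Gamma)-\beta_0^{(2)}(\Gamma)+1=r.
\]
Once the log-integrability hypothesis is in place, Theorem~\ref{thm:StronglyBounded} gives at once that $X$ is $n-\operatorname{rank}\partial F(X)=r$-bounded, which is the first assertion.

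The crux, and the step I expect to be the genuine obstacle, is verifying $\log_+[\partial F(X)^*\partial F(X)]\in L^1$; concretely this asks that $\int_{0^+}|\log t|\,d\mu(t)<\infty$, where $\mu$ is the composition of the trace with the spectral measure of the positive operator $Q=\partial F(X)^*\partial F(X)$. Here soficity is used through the determinant conjecture. Since each $X_j\in\mathbb{Q}\Gamma$ and $F$ has rational coefficients, every entry of $\partial F(X)$ lies in $\mathbb{Q}\Gamma\otimes\mathbb{Q}\Gamma^o\subset L(\Gamma\times\Gamma^o)$; clearing a common denominator, $c\,\partial F(X)$ is a matrix over the integral group ring $\mathbb{Z}[\Gamma\times\Gamma^o]$ for some integer $c$. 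As $\Gamma$ is sofic, so are $\Gamma^o\cong\Gamma$ and the product $\Gamma\times\Gamma^o$, and I would invoke the determinant conjecture for sofic groups (Elek--Szab\'o): the Fuglede--Kadison determinant of any matrix over $\mathbb{Z}[\Gamma\times\Gamma^o]$ is at least $1$, so that $\int_{0^+}\log t\,d\mu_{(c\partial F(X))^*(c\partial F(X))}(t)\geq 0$. Because $Q$ is bounded its spectrum lies in a finite interval, so the contribution of the region $t\geq 1$ to this integral is finite, and passing from $c\,\partial F(X)$ back to $\partial F(X)$ only shifts the integral by a finite constant. These two facts together force $\int_{0^+}|\log t|\,d\mu(t)<\infty$, which is the required hypothesis.

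For the final assertion I would specialize to $|\Gamma|=\infty$ and $\beta_1^{(2)}(\Gamma)=0$. Then $\beta_0^{(2)}(\Gamma)=0$ (see \cite{luck:book}), so $r=\beta_1^{(2)}(\Gamma)+1=1$, and the preceding steps show that the generating tuple $X$ of $L(\Gamma)=W^*(X)$ is $1$-bounded, i.e.\ a strongly $1$-bounded set. To upgrade this to strong $1$-boundedness of the von Neumann algebra it remains only to check that $X$ satisfies one of the conditions (a)--(c) of the definition, and here I would argue by the amenability dichotomy: if $L(\Gamma)$ is amenable then condition (b) is immediate, whereas if $L(\Gamma)$ is non-amenable then $X$ is a non-amenability set and condition (c) applies, by the results of Hayes \cite{hayes:oneBounded}. (Should $\Gamma$ possess a generator $g_j$ of infinite order, condition (a) can instead be checked directly, as $X_j=g_j+g_j^{-1}$ then has an arcsine distribution and hence $\chi(X_j)>-\infty$.) In either case $L(\Gamma)$ is strongly $1$-bounded, as claimed.
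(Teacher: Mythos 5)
Your proposal is correct and follows essentially the same route as the paper: Lemma \ref{lemma:Betti} to produce $X$ and $F$, soficity plus the determinant conjecture (Elek--Szab\'o) to verify the log-integrability hypothesis, Theorem \ref{thm:StronglyBounded} to get $r$-boundedness, and the amenable/non-amenable dichotomy (hyperfiniteness via Jung, or non-amenability sets via Hayes) to upgrade to strong $1$-boundedness of $L(\Gamma)$. The only differences are cosmetic: you spell out the denominator-clearing and Fuglede--Kadison determinant step in more detail than the paper does, and you add an optional check of condition (a) via the arcsine law.
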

\begin{proof}
By Lemma \ref{lemma:Betti} there exists an $n$-tuple $X=(X_{1},\dots,X_{n})$
of self-adjoint elements in $\mathbb{Q}\Gamma\subset L\Gamma$ and
a vector-valued polynomial function $F=(F_{1},\dots,F_{k})$ with
$F_{j}\in\mathbb{Q}[t_{1},\dots,t_{n}]$ so that $F(X)=0$ and moreover
$\operatorname{rank}\partial F(X)=n-(\beta_{1}^{(2)}(\Gamma)-\beta_{0}^{(2)}(\Gamma)+1)$. 

Furthermore, since $\partial F(X)\in\mathbb{Q}(\Gamma\times\Gamma^{o})$
and $\Gamma$ (thus also $\Gamma\times\Gamma^{o})$ is sofic, the
determinant conjecture holds \cite{Elek:determinantConj,skandalis:detConj,luck:book}.
Thus $\log_{+}[\partial F(X)^{*}\partial F(X)]\in L^{1}(W^{*}(X)\otimes W^{*}(X))$. 

We may thus apply Theorem \ref{thm:StronglyBounded} to conclude that
$(X_{1},\dotso,X_{n})$ is $n-\operatorname{rank}(\partial F(X))=\beta_{1}^{(2)}(\Gamma)-\beta_{0}^{(2)}(\Gamma+)1=r$
 bounded. 

Thus if $\beta_{1}^{(2)}(\Gamma)=0$, it follows that $r=1$.

If $\Gamma$ is an infinite group, $\beta_{0}^{(2)}(\Gamma)=0$.  If  $\Gamma$ is amenable, it follows \cite{connes:injective} that $L(\Gamma)$ is  hyperfinite and thus
from the results of \cite{jung:onebounded} we get that $L(\Gamma)$ is strongly $1$-bounded. 

Assume that $\Gamma$ is non-amenable; then any finite generating set of $\Gamma$ is a non-amenability set (cf. \cite{hayes:oneBounded}).
  Since $\Gamma$ is sofic, its microstates spaces are non-empty. It follows from \cite{hayes:oneBounded} 
that $L(\Gamma)$ is strongly $1$-bounded.  \end{proof}
\begin{rem}
The proof of Theorem \ref{thm:BettiBounded} still goes through if
we assume that $\Gamma$ satisfies the determinant conjecture (or,
more precisely, that $\log_{+}[\partial F(X)^{*}\partial F(X)]\in L^{1}(W^{*}(X)\otimes W^{*}(X))$
for the specific function $F$ we are dealing with) and that $L(\Gamma)$
satisfies the Connes embedding conjecture. 
\end{rem}
It is an open question whether infinite property $(T)$ von Neumann algebras
are always strongly $1$-bounded, even in the group case, although
it is known that every generating set must have free entropy dimension
at most $1$ \cite{hlyakht-jung:freeEntropyPropertyT}. However, we can settle the case of finitely presented sofic property $(T)$ groups:
\begin{cor}
Let $\Gamma$ be an infinite finitely presented sofic property $(T)$ group. Then $L(\Gamma)$ is strongly
$1$-bounded. \end{cor}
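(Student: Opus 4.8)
The plan is to reduce the corollary entirely to Theorem \ref{thm:BettiBounded} by supplying the single missing input, namely that an infinite property $(T)$ group has vanishing first $L^{2}$ Betti number. This is the only nontrivial ingredient: once $\beta_{1}^{(2)}(\Gamma)=0$ is known, the hypotheses of Theorem \ref{thm:BettiBounded} (finite generation, finite presentation, soficity, infiniteness) are all in place, and the conclusion follows immediately.

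First I would recall the classical fact that if $\Gamma$ has Kazhdan's property $(T)$ then $\beta_{1}^{(2)}(\Gamma)=0$. The cleanest route is through the Delorme--Guichardet theorem: property $(T)$ is equivalent to the vanishing of the first reduced cohomology $\overline{H}^{1}(\Gamma,\pi)$ for every unitary representation $\pi$. Applying this to the left regular representation $\pi=\ell^{2}(\Gamma)$ gives $\overline{H}^{1}(\Gamma,\ell^{2}\Gamma)=0$, and since $\beta_{1}^{(2)}(\Gamma)=\dim_{L\Gamma}\overline{H}^{1}(\Gamma,\ell^{2}\Gamma)$, we conclude $\beta_{1}^{(2)}(\Gamma)=0$. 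I would cite \cite{luck:book} for this standard computation of $L^{2}$-Betti numbers in terms of reduced cohomology.

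With this in hand the proof is a direct invocation. Since $\Gamma$ is infinite, $\beta_{0}^{(2)}(\Gamma)=0$, so the quantity $r=\beta_{1}^{(2)}(\Gamma)-\beta_{0}^{(2)}(\Gamma)+1$ appearing in Theorem \ref{thm:BettiBounded} equals $1$. As $\Gamma$ is by hypothesis finitely generated, finitely presented, and sofic, Theorem \ref{thm:BettiBounded} applies verbatim and yields that $L(\Gamma)$ is strongly $1$-bounded.

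I do not expect any genuine obstacle in this argument, because all the analytic difficulty has already been absorbed into Theorem \ref{thm:BettiBounded} and, upstream of it, into the Fisher information estimate of Lemma \ref{lem:FisherInequality} together with the determinant-conjecture input for sofic groups. The only point requiring care is the bookkeeping of the two relevant vanishing inputs — $\beta_{1}^{(2)}=0$ from property $(T)$ and $\beta_{0}^{(2)}=0$ from infiniteness — which together force $r=1$; everything else is a citation to the preceding theorem.
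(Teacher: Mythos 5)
Your proposal is correct and follows essentially the same route as the paper: the paper's proof is the one-line observation that property $(T)$ implies $\beta_{1}^{(2)}(\Gamma)=0$ (and finite generation), after which Theorem \ref{thm:BettiBounded} applies. Your additional detail via Delorme--Guichardet and the explicit bookkeeping of $\beta_{0}^{(2)}=0$ only expands on what the paper cites to \cite{luck:book}.
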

\begin{proof}
Indeed, property $(T)$ implies that $\beta_{1}^{(2)}(\Gamma)=0$
and that the group is finitely generated (see e.g. \cite{luck:book}).
\end{proof}
As noted in \cite[Corollary 4.6]{dvv:improvedrandom}, the free entropy
dimension $\delta_{0}$ is an invariant of the group algebra $\mathbb{C}\Gamma$
of a discrete group. 
\begin{cor}
\label{cor:Rigidity}Let $M$ be a finite von Neumann algebra that is not
strongly $1$-bounded.\footnote{For example, $M=W^{*}(X)$ with $\delta_{0}(X)>1$,
e.g. $M=L(\Gamma)$ with $\delta_{0}(\mathbb{C}\Gamma)>1$, e.g.,
$\Gamma=\mathbb{F}_{n}$, $n\geq2$.} If $M\cong L(\Lambda)$ with
$\Lambda$ finitely generated finitely presented sofic, then $\beta_{1}^{(2)}(\Lambda)\neq0$. \end{cor}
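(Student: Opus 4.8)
The plan is to prove the contrapositive: assuming $\beta_{1}^{(2)}(\Lambda)=0$, I would show that $M$ is strongly $1$-bounded, contradicting the standing hypothesis. Because $M\cong L(\Lambda)$ and strong $1$-boundedness is, by the Jung and Hayes invariance results recalled in the introduction (and packaged into the definition of a strongly $1$-bounded von Neumann algebra), a property of the isomorphism class of a finite von Neumann algebra rather than of any particular generating set, it suffices to prove that $L(\Lambda)$ itself is strongly $1$-bounded and then transport this conclusion along the isomorphism $M\cong L(\Lambda)$.

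First I would dispose of the degenerate possibility that $\Lambda$ is finite. In that case $L(\Lambda)=\mathbb{C}\Lambda$ is finite-dimensional, hence hyperfinite, and amenable (in particular finite-dimensional) finite von Neumann algebras are strongly $1$-bounded by \cite{jung:onebounded}; this already contradicts the assumption that $M$ is not strongly $1$-bounded. Put differently, the hypothesis that $M$ fails to be strongly $1$-bounded forces $M$, and therefore $\Lambda$, to be infinite. With $\Lambda$ now an infinite, finitely generated, finitely presented, sofic group satisfying $\beta_{1}^{(2)}(\Lambda)=0$, I would invoke Theorem \ref{thm:BettiBounded} directly, which yields that $L(\Lambda)$ is a strongly $1$-bounded von Neumann algebra. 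Here the infiniteness of $\Lambda$ is exactly what guarantees $\beta_{0}^{(2)}(\Lambda)=0$, so that the relevant invariant $r=\beta_{1}^{(2)}(\Lambda)-\beta_{0}^{(2)}(\Lambda)+1$ equals $1$.

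I expect no genuine obstacle, since this is in essence the contrapositive of Theorem \ref{thm:BettiBounded}; the only two points demanding care are (i) excluding the finite case so that the infiniteness hypothesis of the theorem is actually met, and (ii) observing that strong $1$-boundedness passes across the abstract isomorphism $M\cong L(\Lambda)$ rather than merely along a fixed presentation. Both are handled above — the former by the hyperfinite case of \cite{jung:onebounded}, the latter by the fact that a good generating set of $L(\Lambda)$ is carried by the isomorphism to a good generating set of $M$. This furnishes the contradiction and hence forces $\beta_{1}^{(2)}(\Lambda)\neq0$.
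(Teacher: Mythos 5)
Your proof is correct and follows essentially the same route as the paper, which simply notes that $\beta_1^{(2)}(\Lambda)=0$ together with Theorem \ref{thm:BettiBounded} would make $L(\Lambda)\cong M$ strongly $1$-bounded, a contradiction. Your extra care in disposing of the case $|\Lambda|<\infty$ (via hyperfiniteness and \cite{jung:onebounded}) is a reasonable touch that the paper leaves implicit, since Theorem \ref{thm:BettiBounded} as stated requires $|\Gamma|=\infty$.
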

\begin{proof}
If $\beta_{1}^{(2)}(\Lambda)=0$, Theorem \ref{thm:BettiBounded}
would imply that $L(\Lambda)\cong M$ is strongly $1$-bounded. Contradiction. \end{proof}
\begin{conjecture}
\label{conj:FreeEntropyBetti}Let $\Gamma$ be a finitely presented
finitely generated sofic group with $\beta_{1}^{(2)}(\Gamma)>0$. Then\\
(a) $L(\Gamma)$ is not strongly one bounded;\\
(b) $\delta_0(\mathbb{C}\Gamma) > 1$;\\
(c) $\delta_{0}(\mathbb{C}\Gamma)=\beta_{1}^{(2)}(\Gamma)+1.$
\end{conjecture}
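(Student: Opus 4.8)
The plan is to prove the three parts in the order (c) $\Rightarrow$ (b) $\Rightarrow$ (a), concentrating all of the genuine difficulty in part (c). Once (c) is known, (b) is immediate, since $\beta_{1}^{(2)}(\Gamma)>0$ gives $\delta_{0}(\mathbb{C}\Gamma)=\beta_{1}^{(2)}(\Gamma)+1>1$; note that $\beta_{1}^{(2)}(\Gamma)>0$ already forces $\Gamma$ to be infinite, so $\beta_{0}^{(2)}(\Gamma)=0$ and the formula of Lemma \ref{lemma:Betti} reads $n-\operatorname{rank}\partial F(X)=\beta_{1}^{(2)}(\Gamma)+1$. Part (a) is then formal: $\delta_{0}(\mathbb{C}\Gamma)$ is an invariant of the group algebra independent of the chosen generating set, and every strongly $1$-bounded algebra satisfies $\delta_{0}(Y)\leq1$ for each of its finite generating sets $Y$; hence $\delta_{0}(\mathbb{C}\Gamma)>1$ is incompatible with strong $1$-boundedness.

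For part (c), one inequality is already available. By \cite{connes-shlyakht:l2betti} we have the upper bound $\delta_{0}(\mathbb{C}\Gamma)\leq\beta_{1}^{(2)}(\Gamma)+1$ for any finitely generated infinite group (equivalently, this follows from the general estimate $\delta_{0}\leq\delta^{*}$ together with the identity $\delta^{*}(\mathbb{C}\Gamma)=\beta_{1}^{(2)}(\Gamma)+1$ of \cite{shlyakht-mineyev:freedim}). The entire content of (c) is therefore the reverse, \emph{microstates} lower bound
\[
\delta_{0}(\mathbb{C}\Gamma)\geq\beta_{1}^{(2)}(\Gamma)+1.
\]
The approach to this bound would exploit soficity to manufacture microstates directly. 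A sofic approximation yields asymptotically multiplicative, asymptotically trace-preserving maps $\Gamma\to\operatorname{Sym}(d_{k})\subset M_{d_{k}}(\mathbb{C})$, and hence an abundant supply of matricial microstates for the self-adjoint generators $X$ of $\mathbb{C}\Gamma$. One would then try to show these microstate sets are large enough, in the sense of Voiculescu's covering/packing (metric entropy) estimates, to force the inequality above. The natural mechanism is that the space $Z^{1}(\mathbb{C}\Gamma,L^{2}(\mathbb{C}\Gamma\otimes\mathbb{C}\Gamma^{o}))\cong\ker\partial F(X)$ of $L^{2}$-derivations, whose von Neumann dimension computes $\beta_{1}^{(2)}(\Gamma)+1$ by Lemma \ref{lemma:Betti}, should manifest at the finite-dimensional level as a family of roughly $d_{k}^{2}(\beta_{1}^{(2)}(\Gamma)+1)$ independent tangent directions along which a given microstate can be moved while remaining inside the microstate set; exponentiating this infinitesimal count into a genuine volume lower bound on $\epsilon$-neighborhoods would give the result.

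The hard part, and the reason this remains conjectural, is precisely this last step: converting the linear-algebraic derivation count into a nonlinear volumetric estimate on the microstate variety. This is the general open question of whether $\delta_{0}=\delta^{*}$; the cohomological input controls the non-microstates quantity $\delta^{*}$ and the formal tangent space to the variety cut out by the relations $F(X)=0$, but the microstates free entropy is governed by the actual (possibly much smaller) measure of the microstate sets, and there is no known mechanism forcing the realized dimension to match the tangent dimension. To close the gap one would need either a quantitative implicit-function or {\L}ojasiewicz-type argument showing that near the sofic microstates the relations $F=0$ genuinely cut out a variety of the expected dimension, together with a lower bound on its local volume, or a direct random-matrix construction attached to the sofic approximation (for instance a permutation or Bernoulli model) whose free entropy can be bounded below. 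At present the free groups, where $\beta_{1}^{(2)}(\mathbb{F}_{n})=n-1$ and $\delta_{0}=n$ \cite{dvv:entropy3}, are essentially the only family in which such a microstates lower bound is established, and propagating the mechanism beyond this base case is the central obstacle.
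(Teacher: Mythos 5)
This statement is labeled as a \emph{conjecture} in the paper, and the paper offers no proof of it; the author only records the implications (c)$\implies$(b)$\implies$(a), the known upper bound $\delta_{0}(\mathbb{C}\Gamma)\leq\beta_{1}^{(2)}(\Gamma)+1$, and the fact that the reverse inequality is established only in special cases such as free groups and certain (amalgamated) free products. Your write-up correctly reproduces this logical skeleton and correctly isolates the missing ingredient, but it is not a proof: the entire mathematical content of the statement is the microstates lower bound $\delta_{0}(\mathbb{C}\Gamma)\geq\beta_{1}^{(2)}(\Gamma)+1$, and your treatment of it is a description of a strategy (sofic microstates exist; the derivation space $\ker\partial F(X)$ should produce tangent directions; these should exponentiate to a volume bound) rather than an argument. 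As you yourself acknowledge, the step from the linear count of $L^{2}$-derivations to a packing-number lower bound for the microstate sets is exactly the open problem of whether $\delta_{0}=\delta^{*}$, and no mechanism is supplied to carry it out. So the gap is genuine and is the whole theorem.

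One structural remark: by routing everything through (c) you have chosen the hardest possible path. The paper explicitly suggests that (a) or (b) may be more approachable than (c); for instance, (a) only requires ruling out the existence of \emph{some} strongly $1$-bounded generating set, which is a priori weaker than computing $\delta_{0}$ exactly, and (b) only requires a strict inequality $\delta_{0}>1$ rather than the precise value $\beta_{1}^{(2)}(\Gamma)+1$. A proof attempt aimed at (a) or (b) directly could in principle avoid the full $\delta_{0}=\delta^{*}$ problem, so if you return to this you may want to decouple the three parts rather than deriving all of them from the strongest one.
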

Note that (c)$\implies$(b)$\implies$(a) since if $L(\Gamma)$ were strongly $1$-bounded,
then $\delta_{0}(\mathbb{C}\Gamma)$ would have to be $1$. The inequality
$\delta_{0}(\mathbb{C}\Gamma)\leq\beta_{1}^{(2)}(\Gamma)+1$ is known
(see \cite{connes-shlyakht:l2betti}) and actually follows from the
results of the present paper as well. However, the reverse inequality occurring in statement (c)
is only known in a few special cases, e.g. $\Gamma=\mathbb{F}_{n}$, or free products of groups with vanishing first Betti number (possibly with amalgamation over finite or amenable groups)  \cite{brown-dykema-jung:amalgamated,shlyakht:lowerEstimates}. Nonetheless, it is possible that parts (a) or  (b) are more approachable than part (c). 
\begin{rem}
Together with Corollary \ref{cor:Rigidity}, Conjecture \ref{conj:FreeEntropyBetti}(a) 
would imply the following: if $\Gamma$ and $\Lambda$ are two finitely
generated finitely presented sofic groups and $L(\Gamma)\cong L(\Lambda)$ then $\beta_{1}^{(2)}(\Gamma)$,
$\beta_{1}^{(2)}(\Lambda)$ are either simultaneously zero or simultaneously
non-zero. In other words, the vanishing of $\beta_{1}^{(2)}$ is a
$W^{*}$-equivalence invariant for such groups.
\end{rem}
\bibliographystyle{plain}
%\bibliography{amsj,tex/quasifree}

\end{document}